\documentclass[10pt,a4paper]{article}
\usepackage[utf8]{inputenc}
\usepackage[T1]{fontenc}

\usepackage{tikz}
\usetikzlibrary{shapes,arrows,calc, fit, decorations}
\usetikzlibrary{decorations.pathreplacing,decorations.markings}
\usetikzlibrary{fit}

\usepackage{float}
\usepackage{a4wide}
\usepackage{amssymb}
\usepackage[colorlinks=true,citecolor=black,linkcolor=black,urlcolor=blue]{hyperref}

\usepackage{pifont}

\usepackage[makeroom]{cancel}

\usepackage{array}
\newcolumntype{M}[1]{>{\centering\arraybackslash}m{#1}}
\newcolumntype{N}{@{}m{0pt}@{}}

\usepackage{caption}
\usepackage{subcaption} 

\usepackage{amsthm}
\usepackage{amsmath}
\usepackage{mathrsfs}

\usepackage{wrapfig}

\usepackage{stackengine}

\newtheorem{theorem}{Theorem}
\newtheorem{corollary}[theorem]{Corollary}
\newtheorem{proposition}[theorem]{Proposition}
\newtheorem{lemma}[theorem]{Lemma}

\def\abs#1{\left| #1 \right|}
\def\paren#1{\left( #1 \right)}
\def\acc#1{\left\{ #1 \right\}}

\def\ceil#1{\left\lceil #1 \right\rceil}

\renewcommand{\le}{\leqslant}
\renewcommand{\ge}{\geqslant}

\DeclareMathOperator{\mad}{\mathrm{mad}}

%\title{On planar $P_g$-universal $(m, n)$-colored-mixed graphs}
\title{Homomorphisms of planar $(m, n)$-colored-mixed graphs to planar targets}

\date{}
\author{Fabien Jacques and Pascal Ochem\thanks{ This work is supported by the ANR project HOSIGRA (ANR-17-CE40-0022)}\\ LIRMM, Universit\'e de Montpellier, and CNRS. France}

\begin{document}

\maketitle

\begin{abstract}
An $(m, n)$-colored-mixed graph $G=(V, A_1, A_2,\cdots, A_m, E_1, E_2,\cdots, E_n)$ is a graph having $m$ colors of arcs and $n$ colors of edges.
We do not allow two arcs or edges to have the same endpoints. A homomorphism from an $(m,n)$-colored-mixed graph $G$
to another $(m, n)$-colored-mixed graph $H$ is a morphism $\varphi:V(G)\rightarrow V(H)$ such that each edge (resp. arc)
of $G$ is mapped to an edge (resp. arc) of $H$ of the same color (and orientation).
An $(m,n)$-colored-mixed graph $T$ is said to be $P_g^{(m, n)}$-universal if every graph in $P_g^{(m, n)}$ (the planar $(m, n)$-colored-mixed graphs with girth at least $g$)
admits a homomorphism to $T$.

We show that planar $P_g^{(m, n)}$-universal graphs do not exist for $2m+n\ge3$ (and any value of $g$) and
find a minimal (in the number vertices) planar $P_g^{(m, n)}$-universal graphs in the other cases. 
\end{abstract}

\section{Introduction}

The concept of homomorphisms of $(m, n)$-colored-mixed graph was introduced by J. Nes\v{e}t\v{r}il and A. Raspaud~\cite{MNCM}
in order to generalize homomorphisms of $k$-edge-colored graphs and oriented graphs.

An \emph{$(m, n)$-colored-mixed graph} $G=(V, A_1, A_2,\cdots, A_m, E_1, E_2,\cdots, E_n)$ is a graph having $m$ colors of arcs and $n$ colors of edges.
We do not allow two arcs or edges to have the same endpoints.
The case $m=0$ and $n=1$ corresponds to simple graphs, $m=1$ and $n=0$ to oriented graphs and $m=0$ and $n=k$ to $k$-edge-colored graphs. For the case $m=0$ and $n = 2$
($2$-edge-colored graphs) we refer to the two types of edges as \emph{blue} and \emph{red} edges.

A \emph{homomorphism} from an $(m, n)$-colored-mixed graph $G$ to another $(m, n)$-colored-mixed graph $H$ is a mapping $\varphi:V(G) \rightarrow V(H)$
such that every edge (resp. arc) of $G$ is mapped to an edge (resp. arc) of $H$ of the same color (and orientation).
If $G$ admits a homomorphism to $H$, we say that $G$ is \emph{$H$-colorable} since this homomorphism can be seen as a coloring of the vertices of $G$
using the vertices of $H$ as colors. The edges and arcs of $H$ (and their colors) give us the rules that this coloring must follow.
Given a class of graphs $\mathcal{C}$, a graph is \emph{$\mathcal{C}$-universal} if for every graph $G \in \mathcal{C}$ is $H$-colorable.
The class $P_g^{(m, n)}$ contains every planar $(m, n)$-colored-mixed graph with girth at least $g$.

In this paper, we consider some planar $P_g^{(m, n)}$-universal graphs with $k$ vertices.
They are depicted in Figures~\ref{fig:t_oriented} and~\ref{fig:t_2edge}.
The known results about this topic are as follows.

\begin{theorem}\label{thm:known}{\ }
\begin{enumerate}
\item $K_4$ is a planar $P^{(0,1)}_3$-universal graph. This is the four color theorem.
\item $K_3$ is a planar $P^{(0,1)}_4$-universal graph. This is Grötzsch's Theorem \cite{grotzsch}.
\item $\overrightarrow{C_6^2}$ is a planar $P_{16}^{(1,0)}$-universal graph~\cite{P10}.
\end{enumerate}
\end{theorem}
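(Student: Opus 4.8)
The plan is to reduce items~1 and~2 to classical coloring theorems via a standard dictionary, and to obtain item~3 by citation. First I would record the basic observation that in the $(0,1)$ case---ordinary simple graphs, one edge color and no arcs---a homomorphism $\varphi \colon G \to K_k$ is exactly a proper $k$-coloring. Indeed, since $K_k$ has no loop, each edge $uv$ of $G$ must satisfy $\varphi(u)\neq\varphi(v)$; conversely, any assignment giving distinct colors to adjacent vertices is a valid homomorphism, because $K_k$, being complete, contains an edge between every pair of distinct vertices. Hence $K_k$ is $P_g^{(0,1)}$-universal if and only if every planar graph of girth at least $g$ is properly $k$-colorable.

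Granting this dictionary, item~1 follows immediately: the class $P_3^{(0,1)}$ is just the class of all simple planar graphs (forbidding loops and multiple edges already forces girth at least $3$), so the Four Color Theorem provides the required proper $4$-colorings and hence homomorphisms to $K_4$. Item~2 is the same argument one girth-level up: $P_4^{(0,1)}$ is the class of triangle-free planar graphs, and Grötzsch's Theorem~\cite{grotzsch} states precisely that every such graph is properly $3$-colorable, yielding a homomorphism to $K_3$.

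Item~3 is genuinely different, since it lives in the oriented case $(m,n)=(1,0)$ where the coloring dictionary no longer applies: a homomorphism must now respect orientations, and there is no complete-graph shortcut. Rather than reconstruct the target by hand, I would invoke the result of~\cite{P10}, which exhibits $\overrightarrow{C_6^2}$ as a planar oriented graph to which every planar oriented graph of girth at least $16$ admits a homomorphism.

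I expect no internal obstacle: the whole weight of the statement rests on the three external results. The Four Color Theorem and Grötzsch's Theorem are deep, and I would not attempt to reprove them; likewise item~3 depends on the girth-$16$ oriented-colorability argument of~\cite{P10}. The only thing to verify directly is the elementary equivalence between $K_k$-homomorphisms and proper $k$-colorings, after which items~1 and~2 can be read off immediately from the two classical theorems.
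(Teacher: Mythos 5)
Your proposal is correct and takes essentially the same approach as the paper: the paper offers no proof beyond the citations embedded in the theorem statement itself, treating all three items as known external results. Your only addition is to spell out the standard equivalence between homomorphisms to $K_k$ and proper $k$-colorings, which the paper leaves implicit.
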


Our first result shows that, in addition to the case of $(0,1)$-graphs covered by Theorems~\ref{thm:known}.1 and~\ref{thm:known}.2,
our topic is actually restricted to the cases of oriented graphs (i.e., $(m,n)=(1,0)$) and 2-edge-colored graphs (i.e., $(m,n)=(0,2)$).

\begin{theorem}\label{thm:Pmn}
For every $g\ge3$, there exists no planar $P_g^{(m,n)}$-universal graph if $2m+n\ge3$.
\end{theorem}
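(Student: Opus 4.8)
The plan is to proceed by contradiction. Suppose that for some $(m,n)$ with $2m+n\ge 3$ and some $g\ge 3$ there is a planar $P_g^{(m,n)}$-universal graph $T$, and set $k=\abs{V(T)}$. The first step is a reduction on the palette. If $(m',n')$ is a sub-palette of $(m,n)$ (that is, $m'\le m$ and $n'\le n$), then the colored-mixed subgraph $T'$ of $T$ obtained by deleting all arcs of colors greater than $m'$ and all edges of colors greater than $n'$ is again planar, and it is $P_g^{(m',n')}$-universal: any $G\in P_g^{(m',n')}$ is also a member of $P_g^{(m,n)}$, so it maps to $T$, and since $G$ uses only the colors of the sub-palette its image lies entirely in $T'$. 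Hence it suffices to exclude planar universal targets for the minimal sub-palettes satisfying $2m+n\ge 3$, namely $(0,3)$, $(1,1)$, and $(2,0)$. In each of these, the possible relations on an ordered pair of adjacent vertices split into at least three distinguishable types (three edge colors; out-arc / in-arc / edge; or the four oriented arc-colors), a feature I would exploit uniformly.

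For each base palette the goal is to build a single planar $(m,n)$-colored-mixed graph $G$ of girth at least $g$ that admits no homomorphism to $T$. Because a target has no loops, the two endpoints of any arc or edge of $G$ receive distinct images; but girth $\ge g$ forbids all short cycles, and in particular all cliques, so one cannot force a large set of pairwise-distinct images directly. Instead I would design, in each palette, a two-terminal gadget of girth $\ge g$ that in every homomorphism to $T$ forces its terminals into a prescribed one of the available relation types, and then amalgamate linearly many copies of these gadgets along a planar high-girth skeleton so as to simulate a dense auxiliary structure while keeping the whole graph planar and of girth $\ge g$.

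The gadgets would be arranged so that the image of $G$ in $T$ is forced to realize more adjacent pairs than a planar graph on $k$ vertices can contain: either the underlying simple graph of the image exceeds the Euler bound of $3k-6$ edges, or it is forced to contain a subdivision of $K_5$ or $K_{3,3}$. Since the image is a subgraph of the planar graph $T$, this gives the desired contradiction. The same skeleton works for every $g$ by choosing the connecting paths long enough, which is why the conclusion is independent of $g$.

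The main difficulty is the gadget design and the propagation analysis in the high-girth regime. Removing short cycles eliminates straightforward clique-based forcing, so each constraint must instead be transmitted along long colored paths, and I must rule out any \emph{collapsing} homomorphism that evades the intended constraint by folding a gadget onto few vertices of $T$. Establishing the rigidity of the gadgets — that every homomorphism to an arbitrary $k$-vertex planar target really does impose the prescribed colored adjacency between the terminals — while simultaneously maintaining planarity and girth at least $g$, is the crux of the proof.
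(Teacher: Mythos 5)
Your reduction to the minimal palettes $(0,3)$, $(1,1)$ and $(2,0)$ is sound (the paper handles general $(m,n)$ directly, but the reduction is correct), and your final target --- forcing the image of some high-girth planar graph to violate the planar edge bound $3k-6$ --- is indeed the contradiction the paper reaches. But the proposal has a genuine gap exactly where you yourself locate the crux: the two-terminal gadgets that ``in every homomorphism to $T$ force the terminals into a prescribed relation type'' are never constructed, and there is good reason to believe they cannot exist. To amalgamate many gadgets while keeping girth at least $g$, the two terminals of a gadget must be far apart and essentially cofacial; a homomorphism may fold the gadget arbitrarily, so what a gadget transmits between the images $\varphi(u)$ and $\varphi(v)$ is the existence in $T$ of walks with prescribed color and orientation patterns, never an actual edge or arc. (For the simple-graph analogue this obstruction is provable: if the terminals are non-adjacent, at distance at least $3$, and on a common face, identifying them preserves planarity and simplicity, so the four color theorem yields a coloring of the gadget in which both terminals receive the same color; hence no such gadget can force distinct, let alone adjacent, images.) So the ``rigidity'' step is not a technical detail to be filled in later; it is the step that fails, and with it the plan of forcing a dense image or a $K_5$/$K_{3,3}$ subdivision.

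The paper's proof turns this very obstruction into the argument. Take $T$ minimal with respect to subgraphs among planar $P_g^{(m,n)}$-universal graphs; minimality yields a connected $G\in P_g^{(m,n)}$ such that every $T$-coloring of $G$ uses every vertex of $T$ (this surjectivity, which your proposal never secures, is what makes a global count over all of $V(T)$ possible). Then build $G'$ by adding, in parallel to each edge or arc of $G$, long paths of degree-$2$ vertices: one path of $g-1$ edges of each edge color, and, for each arc color, alternating paths of lengths $g-1$ and $g$ with both starting orientations. $G'$ is planar with girth at least $g$, so by universality it maps to $T$, and the added paths force, between every pair of vertices of $T$, monochromatic walks of each edge color and alternating walks of both parities of each arc color. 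Consequently each edge-color class of $T$ is a connected spanning subgraph (at least $|V(T)|-1$ edges) and each arc-color class contains at least $2|V(T)|-1$ arcs, so the underlying graph of $T$ has at least $(2m+n)|V(T)|-m-n>3|V(T)|-6$ edges, contradicting planarity. In short, the density contradiction you aim at is reached by counting spanning connected color classes --- a consequence of forced \emph{walks} --- rather than by forced adjacencies, which is precisely the notion your gadgets cannot deliver.
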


As Theorems~\ref{thm:known}.1 and~\ref{thm:known}.2 show for $(0,1)$-graphs, there might exist a trade-off between minimizing the girth $g$ and the number
of vertices of the universal graph, for a fixed pair $(m,n)$.
For oriented graphs, Theorem~\ref{thm:known}.3 tries to minimize the girth.
For oriented graphs and 2-edge-colored graphs, we choose instead to minimize the number of vertices of the universal graph.

\begin{theorem}\label{thm:positive}{\ }
\begin{enumerate}
\item $\overrightarrow{T_5}$ is a planar $P_{28}^{(1,0)}$-universal graph on 5 vertices.
\item $T_6$ is a planar $P_{22}^{(0, 2)}$-universal graph on 6 vertices.
\end{enumerate}
\end{theorem}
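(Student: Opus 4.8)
The two statements are homomorphism-to-a-fixed-target results for sparse planar graphs, and I would attack both by the standard pairing of a \emph{minimal counterexample} with the \emph{discharging method}. The first step is to translate the girth hypothesis into a sparsity bound: a planar graph with girth at least $g$ satisfies $\mad(G)<\frac{2g}{g-2}$ by Euler's formula, so the relevant targets must absorb all of $P_{28}^{(1,0)}$ with $\mad<\frac{28}{13}$ and all of $P_{22}^{(0,2)}$ with $\mad<\frac{11}{5}$. In both cases the maximum average degree is only slightly above $2$, meaning the graphs are ``almost'' disjoint unions of long paths joined at a sparse set of \emph{branch vertices} (degree $\ge 3$); this is exactly the regime in which discharging around \emph{threads} (induced paths whose internal vertices all have degree $2$) is effective.

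The heart of the argument is an analysis of the target $T$ (here $\overrightarrow{T_5}$ or $T_6$). I would first record the elementary extension properties — every vertex of $\overrightarrow{T_5}$ has both an in-neighbour and an out-neighbour, and every vertex of $T_6$ is incident to both a red and a blue edge — so that pendant vertices and single edges never obstruct a homomorphism. The main technical input is a \emph{thread (absorption) lemma}: there is a threshold $\ell_0$ such that for every thread of length at least $\ell_0$, \emph{every} orientation pattern (resp. every red/blue colour pattern) on that path can be realised in $T$ between any prescribed pair of endpoint colours. For the oriented target this amounts to showing that $\overrightarrow{T_5}$ is strongly connected and aperiodic enough that directed walks matching an arbitrary sign sequence of length $\ge\ell_0$ exist between all ordered pairs; for $T_6$ the analogue concerns walks respecting an arbitrary red/blue sequence. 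Given the small size of the targets, I expect this lemma to be proved by a finite case analysis or a short computer check over all patterns up to length $\ell_0$, with longer patterns following by concatenating a short synchronising gadget with a power of a suitable closed walk.

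With the absorption lemma in hand, I would take a counterexample $G$ minimising $|V(G)|+|E(G)|$ and extract its structure. Minimality forces $G$ to be connected with minimum degree at least $2$ (a degree-$1$ vertex is deleted, coloured by induction, and re-attached via the extension property), and — crucially — it forbids any thread longer than the absorption threshold, since such a thread could be shortened, the smaller graph coloured by induction, and the colouring reconstructed by the thread lemma. A few further local configurations at degree-$3$ branch vertices may also need to be shown reducible, but the governing constraint is that \emph{all threads of $G$ are short}. I would then run discharging: give each vertex the charge $d(v)-\frac{2g}{g-2}$, so the total charge is negative by the $\mad$ bound, and let each branch vertex send charge along its incident threads to the degree-$2$ vertices. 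Because threads are short, each branch vertex has only boundedly many degree-$2$ dependents, and a direct computation makes every final charge non-negative — contradicting the negative total. The values $28$ and $22$ are the smallest girths for which this balance closes: if threads carry at most $t$ internal degree-$2$ vertices then the average degree cannot drop below $2+\frac{2}{3t+2}$, and requiring this to contradict $\mad<\frac{2g}{g-2}$, with the $t$ supplied by the absorption lemma for $\overrightarrow{T_5}$ and $T_6$, pins the girth down.

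The step I expect to be the genuine obstacle is the absorption lemma for the specific targets, since it is there that the full structure of $\overrightarrow{T_5}$ and $T_6$ is used: one must verify that these $5$- and $6$-vertex graphs really do realise \emph{every} pattern of every sufficiently large length between \emph{every} pair of endpoints, and the exact threshold so obtained is what feeds back into the discharging and fixes the bounds $28$ and $22$. Everything else — reducing pendant vertices and long threads, and closing the discharging balance — should be routine once that lemma and the right small set of reducible configurations are established.
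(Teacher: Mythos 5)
Your overall framework---minimal counterexample, computer-checked reducibility of long threads, then discharging against the planar girth/$\mad$ bound---is exactly the family of techniques the paper uses, and your absorption lemma for long threads is indeed the paper's first reducible configuration (checked by computer for paths of $6$ internal $2$-vertices in the oriented case and $5$ in the $2$-edge-colored case). But there is a genuine quantitative gap: the discharging balance you describe does not close at girth $28$ and $22$. If the only structural constraint is that each thread has at most $t$ internal $2$-vertices, then (as your own formula $2+\tfrac{2}{3t+2}$ shows, coming from a cubic graph with every edge subdivided $t$ times) the contradiction with $\mad(G)<\tfrac{2g}{g-2}$ requires $g\ge 6t+6$. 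The absorption thresholds for these targets are $t=5$ and $t=4$, so your argument proves universality only for girth $36$ (oriented) and $30$ ($2$-edge-colored), not the claimed $28$ and $22$.

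What is missing is a second, finer reducible configuration, which is where most of the paper's work lies: a $3$-vertex whose \emph{total} number of $2$-weak-neighbors, summed over its three incident threads, is at least $12$ (resp.\ $9$). This caps that total at $k=11$ (resp.\ $k=8$), which is strictly better than the $3t=15$ (resp.\ $12$) your thread bound gives, and the discharging rule then has $3^+$-vertices pay $\tfrac1{k+2}$ to each $2$-weak-neighbor, yielding $\mad(G)\ge 2+\tfrac{2}{k+2}$ and the girth bounds $2k+6=28$ and $22$. Crucially, proving this configuration reducible cannot be done by counting available colors alone: in the extremal splits (e.g.\ $l_1=l_2=5$, $l_3=2$ for $\overrightarrow{T_5}$, where the threads can forbid up to $1+1+3=5$ colors out of $5$), one must know \emph{which} sets of colors a thread of each length can forbid (for instance, a thread of $5$ internal $2$-vertices in the oriented case can only forbid one of $\acc{b}$, $\acc{c}$, $\acc{e}$, and a thread of $2$ only $\acc{b,c,d}$ or $\acc{c,d,e}$) and verify that the forbidden sets can never cover the whole vertex set of the target. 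Your proposal's hedge that ``a few further local configurations at degree-$3$ branch vertices may also need to be shown reducible'' points in the right direction, but since this configuration and its set-level analysis are precisely what separate $36/30$ from $28/22$, treating it as an afterthought leaves the stated theorem unproved.
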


The following results shows that Theorem~\ref{thm:positive} is optimal in terms of the number of vertices of the universal graph.

\begin{theorem}\label{thm:negative}{\ }
\begin{enumerate}
\item For every $g\ge3$, there exists an oriented bipartite cactus graph (i.e., $K_4^-$ minor-free graph) with girth at least $g$ and oriented chromatic number at least 5.
\item For every $g\ge3$, there exists a 2-edge-colored bipartite outerplanar graph (i.e., $(K_4^-,K_{2,3})$ minor-free graph) with girth at least $g$ that does not map to a planar graph with at most 5 vertices.
\end{enumerate}
\end{theorem}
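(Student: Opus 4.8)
The plan is to handle both parts through a single mechanism, exploiting that in each case the family of admissible targets is finite up to isomorphism: there are finitely many oriented graphs on $4$ vertices, and finitely many $2$-edge-colored graphs whose underlying simple graph is planar on at most $5$ vertices (a simple planar graph on $5$ vertices has at most $9$ edges). Thus ``$\chi_o(G)\ge 5$'' and ``$G$ does not map to a planar target of order at most $5$'' each amount to ruling out a homomorphism to every member of an explicit finite list $\mathcal{H}$. My unifying goal is to force the images of five designated \emph{terminal} vertices of $G$ to be pairwise adjacent (hence distinct) in the target: arc-adjacent for Part~1, giving five distinct images and so $\chi_o(G)\ge 5$ since no oriented graph of order $4$ can host them; and adjacent in the underlying graph for Part~2, giving a $K_5$ in the image, which is impossible both for order $\le 4$ (too few vertices) and, by non-planarity of $K_5$, for a planar target of order $5$.

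The recurring tool is that directed $2$-paths force distinct oriented colors: if $u\to x\to v$ and $\varphi(u)=\varphi(v)$, then the target would carry both $\varphi(u)\varphi(x)$ and $\varphi(x)\varphi(u)$, a digon, which is forbidden in an oriented graph. An o-clique on $5$ vertices (pairwise joined by directed paths of length at most $2$) therefore has $\chi_o\ge 5$, and the analogous undirected gadget forces $K_5$ in the image. The immediate obstruction is that such dense gadgets have diameter $2$ and hence short cycles, incompatible with girth $g$. So I would not embed a $5$-clique directly; instead I would connect the five terminals pairwise by long \emph{wires} — bipartite-cactus paths for Part~1, outerplanar paths for Part~2 — each engineered so that, for every target $H\in\mathcal{H}$, its endpoints are forced to receive arc-adjacent (resp. adjacent) images. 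The wires are attached so that the whole graph stays $K_4^-$-minor-free for Part~1 (a tree of edge-disjoint even cycles) and $(K_4^-,K_{2,3})$-minor-free for Part~2, so that membership in the class and girth at least $g$ hold by construction.

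The heart of the argument, and the step I expect to be hardest, is designing these long wires so that a single forcing pattern survives arbitrary lengthening. Lengthening a wire relaxes a local constraint such as ``$\varphi(u)\varphi(v)\in A(H)$'' into ``$H$ has a directed walk of length $\ell$ from $\varphi(u)$ to $\varphi(v)$'', which is generically weaker. To recover the intended forcing I would use that each finite target has an eventually periodic walk-reachability relation: for $\ell$ large and lying in a suitable residue class modulo the least common multiple of the periods of the members of $\mathcal{H}$ — a Chinese-remainder choice across the whole finite family — a length-$\ell$ walk realizes exactly the reachability of the short forcing pattern, simultaneously for every $H\in\mathcal{H}$. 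Choosing $\ell$ with the correct parity keeps all cycles even, hence $G$ bipartite, and taking $\ell$ as large as needed pushes the girth past $g$. What remains is the finite verification that one common $\ell$ makes every wire force its constraint on every target in $\mathcal{H}$ while preserving the cactus/outerplanar structure; this case analysis (which in practice one would discharge by computer, as is standard for such optimality results) is the main technical obstacle, after which the forced $5$-clique in the image yields both claimed non-homomorphisms.
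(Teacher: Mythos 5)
Your plan fails at the structural level before the forcing question even arises. Joining five terminal vertices pairwise by internally disjoint paths (your ``wires'') produces a subdivision of $K_5$, and a subdivision of $K_5$ contains $K_4$, $K_4^-$ and $K_{2,3}$ as minors. Hence no such graph can be a cactus (equivalently: in a $K_4^-$-minor-free graph any two vertices are joined by at most two internally disjoint paths, whereas two branch vertices of a subdivided $K_5$ are joined by four) nor outerplanar. Your parenthetical assertion that the construction stays ``a tree of edge-disjoint even cycles'' is incompatible with the topology you prescribe: lengthening the wires does not remove a minor. So the graphs you build simply do not lie in the classes the theorem is about, for both parts.

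The forcing mechanism is also unsound, and this matters independently, since one might hope to salvage the plan with fewer terminals. A wire of length $\ell$ between $u$ and $v$ constrains only that the target contain a walk of length $\ell$ realizing the wire's pattern from $\varphi(u)$ to $\varphi(v)$. Eventual periodicity plus a Chinese-remainder choice of $\ell$ recovers the \emph{limiting} relation of this walk composition, not the relation of the short gadget; and for the targets that actually matter this limiting relation is complete, so nothing is forced. Concretely, in Part 1 the only hard $4$-vertex target is the strongly connected tournament $\overrightarrow{T_4}$ induced by $\acc{a,b,c,d}$: it has closed walks of coprime lengths through every vertex, so for every orientation pattern and every sufficiently large $\ell$ each vertex carries a closed walk of length $\ell$ with that pattern, and therefore no long wire can force its endpoints to receive distinct colors (forcing ``adjacency'' is vacuous in a tournament anyway). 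This is exactly why the paper argues globally rather than locally: for Part 1 it uses a directed cycle of length $\equiv 4\pmod 6$, which among $4$-vertex targets maps only to $\overrightarrow{T_4}$ and must use the color $d$, together with attached cycles that are not $\overrightarrow{T_4}$-colorable when their attachment vertex gets $d$; for Part 2 it shows that the added bicolored paths force both color classes of any $5$-vertex planar target to induce connected spanning subgraphs, so by the bound of $3\cdot 5-6=9$ edges one color class is a spanning tree, and the bipartition of that tree contradicts the fact (from an alternating cycle of length $\equiv 2\pmod 4$) that the images of even- and odd-indexed vertices must intersect. Neither argument forces designated vertices to get pairwise distinct images, and no computer search is needed; your proposal's ``finite verification'' step is not a technicality to be discharged but the precise point where the approach cannot work.
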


Most probably, Theorem~\ref{thm:positive} is not optimal in terms of girth. The following constructions give lower bounds on the girth.

\begin{theorem}\label{thm:ce}{\ }
\begin{enumerate}
\item There exists an oriented bipartite 2-outerplanar graph with girth $14$ that does not map to $\overrightarrow{T_5}$.
\item There exists a 2-edge-colored planar graph with girth $11$ that does not map to $T_6$.
\item There exists a 2-edge-colored bipartite planar graph with girth $10$ that does not map to $T_6$.
\end{enumerate}
\end{theorem}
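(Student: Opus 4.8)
The plan is to prove each of the three parts by exhibiting an explicit graph $G$ together with a proof that $G$ admits no homomorphism to the relevant target ($\overrightarrow{T_5}$ for part 1, $T_6$ for parts 2 and 3). Since the targets have only $5$ and $6$ vertices, the nonexistence of a homomorphism is in principle certifiable by a backtracking search over the at most $5^{\abs{V(G)}}$, respectively $6^{\abs{V(G)}}$, candidate colorings; to keep the argument finite and verifiable by hand, I would reduce the search drastically using \emph{forcing gadgets}, and the bulk of the work is to design these gadgets so that they simultaneously force the intended constraints and raise the girth to the prescribed value ($14$, $11$, and $10$).

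First I would isolate a small \emph{conflict core}: an oriented graph that does not map to $\overrightarrow{T_5}$ for part 1, and a $2$-edge-colored graph that does not map to $T_6$ for parts 2 and 3, each realized at small girth. The core encodes a contradiction through a handful of short constraints between distinguished vertices. The idea is then to replace every arc (resp. edge) of the core by a high-girth gadget whose two endpoints are still forced onto the same pair of target vertices that the original arc (resp. edge) would have forced.

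The gadgets themselves I would build from \emph{multiple internally disjoint directed paths} (resp. color-patterned paths) joining the two endpoints. If $u$ and $v$ are linked by directed paths of several carefully chosen lengths and orientation patterns, then $\varphi(u)$ and $\varphi(v)$ must be joined in the target by directed walks of \emph{all} those lengths and patterns at once; because $\overrightarrow{T_5}$ and $T_6$ are so small, a suitable combination of lengths pins $(\varphi(u),\varphi(v))$ down to a single pair up to the automorphisms of the target. Choosing the shortest path long enough makes every cycle created in the construction have length at least the target girth, and using paths of even length keeps the whole graph bipartite for parts 1 and 3. The structural side conditions---$2$-outerplanarity for part 1 and planarity for parts 2 and 3---are then secured by laying the gadgets out in nested layers and exhibiting the embedding explicitly; the girth values $14,11,10$ come out as the minimum of the internal gadget girth and the length of the core cycles after substitution.

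The main obstacle is the \emph{faithfulness} of the gadgets: I must verify that each high-girth gadget reproduces exactly the constraint of the short arc or edge it replaces with respect to the \emph{specific} target, so that the contradiction present in the conflict core survives the girth-raising and is not accidentally resolved by some long-range coloring that the extra path-length makes available. This requires a case analysis of the images of the internal gadget vertices in $\overrightarrow{T_5}$ and $T_6$---tracking out-neighborhoods, in-neighborhoods, and the distance-$k$ relations of the target---or, equivalently, a short certifying computer search; given the fixed small targets this analysis is finite, and it is the crux of the proof.
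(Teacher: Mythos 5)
Your plan hinges on a step that is not merely delicate but quantifiably impossible at these girths: the ``faithful'' edge-replacement gadgets you require do not exist. Once the original arc or edge of the conflict core is deleted (which you must do, or else its short cycles survive and the girth stays small), every $u$--$v$ connection inside a gadget is a long path, and any two internally disjoint $u$--$v$ paths must have lengths summing to at least $14$ (resp.\ $11$, $10$), so at most one of them can be short. But the reducibility computations in the proof of Theorem~\ref{thm:positive} show that long paths impose almost no constraint on their endpoints: in $\overrightarrow{T_5}$, a path with $6$ or more internal $2$-vertices extends for \emph{every} pair of endpoint colors and \emph{every} orientation pattern, and a path with $4$ or $5$ internal vertices forbids at most one color of $v$ for each color of $u$ (only $b$, $c$, or $e$); the analogous statements hold for $T_6$ with thresholds one smaller. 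Consequently a gadget made of internally disjoint long paths leaves at least $20$ of the $25$ pairs $\paren{\varphi(u),\varphi(v)}$ realizable, whereas the arc relation of $\overrightarrow{T_5}$ has only $10$ pairs --- pinning the pair down ``to a single pair up to automorphism'' is hopeless, and the same count kills the $T_6$ cases. The contradiction in your conflict core therefore does not survive the girth-raising; the obstacle you flag as ``the crux'' is in fact fatal to the whole scheme.

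The paper's proof works around this obstruction rather than through it, and the workaround is the idea your proposal is missing. At these girths the \emph{only} constraints long paths can still express are of the form ``the two endpoints cannot both receive one particular special color'': in $T_6$ a blue path of length $5$ cannot join $c$ to $c$ (and a (blue, blue, red, blue) path of length $4$ cannot either), and in $\overrightarrow{T_5}$ a certain oriented path $P$ on $7$ vertices cannot have both ends at $b$. The paper pairs this with special long cycles --- a $14$-cycle with a single orientation defect, a red odd cycle, or the graph $M$ from Theorem~\ref{thm:negative}.2 --- each of whose colorings must place the special color ($b$, resp.\ $c$) on \emph{some} vertex, with no control over which one. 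Since the position is unknown, the construction attaches a fresh copy of the forcing cycle to every candidate vertex of a main copy and joins all the relevant pairs by the forbidden-pair paths; in any purported coloring some connecting path then has both ends forced to the special color, a contradiction. This existential-forcing-plus-all-pairs scheme, not faithful edge simulation, is what makes girths $14$, $11$, and $10$ reachable.
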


\begin{figure}[H]
\begin{minipage}{0.5\textwidth}
\begin{center}
 \includegraphics{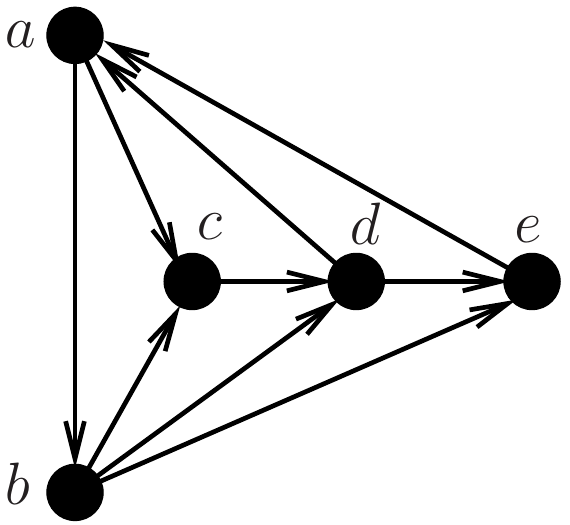}
 \caption{The $P_{28}^{(1,0)}$-universal graph overrightarrow$(T_5)$.\label{fig:t_oriented}}
 %\caption{The $P_{28}^{(1,0)}$-universal graph $\overrightarrow{T_5}$.\label{fig:t_oriented}}
\end{center}
\end{minipage}\hfill
\begin{minipage}{0.5\textwidth}
\begin{center}
 \includegraphics{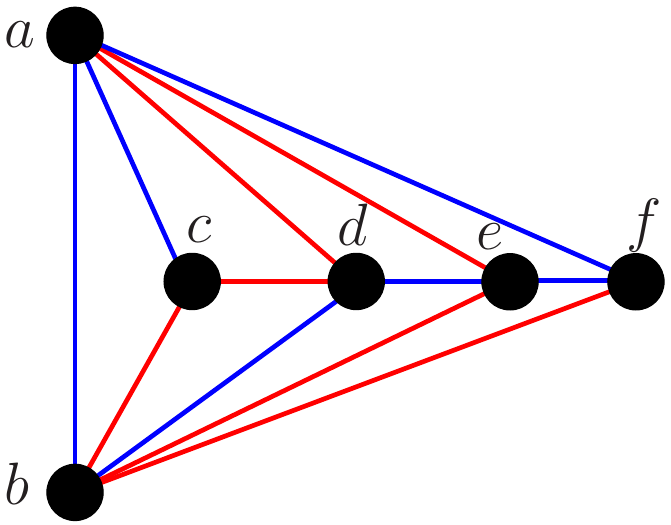}
 \caption{The $P_{22}^{(0,2)}$-universal graph $T_6$.\label{fig:t_2edge}}
\end{center}
\end{minipage}
\end{figure}

Next, we obtain the following complexity dichotomies:

\begin{theorem}\label{thm:NPC}{\ }
\begin{enumerate}
\item For any fixed girth $g\ge 3$, either every graph in $P_g^{(1,0)}$ maps to $\overrightarrow{T_5}$ or it is NP-complete
to decide whether a graph in $P_g^{(1,0)}$ maps to $\overrightarrow{T_5}$.
Either every bipartite graph in $P_g^{(1,0)}$ maps to $\overrightarrow{T_5}$ or it is NP-complete to decide whether a bipartite graph in $P_g^{(1,0)}$ maps to $\overrightarrow{T_5}$.
\item Either every graph in $P_g^{(0,2)}$ maps to $T_6$ or it is NP-complete to decide whether a graph in $P_g^{(1,0)}$ maps to $T_6$.
Either every bipartite graph in $P_g^{(0,2)}$ maps to $T_6$ or it is NP-complete to decide whether a bipartite graph in $P_g^{(1,0)}$ maps to $T_6$.
\end{enumerate}
\end{theorem}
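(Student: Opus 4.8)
The plan is to prove each dichotomy in the standard ``trivial-or-hard'' shape: the first alternative, that every graph in the class maps to the target, makes the decision problem solvable in constant time (always answer ``yes''), so the entire content lies in showing that the \emph{negation} of the first alternative forces NP-completeness. Fix $g$ and let $T$ denote the relevant target ($\overrightarrow{T_5}$ or $T_6$). Membership in NP is immediate, since a homomorphism $G\to T$ is a certificate of size $O(|V(G)|)$ that is verified by checking, for each edge and arc of $G$, that its image is an edge or arc of $T$ of the same color and orientation. The work is therefore to establish NP-hardness under the hypothesis that there exists a witness graph $G_0$ in the relevant class ($P_g^{(1,0)}$, its bipartite subclass, $P_g^{(0,2)}$, or its bipartite subclass) with $G_0\not\to T$; note that Theorems~\ref{thm:negative} and~\ref{thm:ce} already exhibit such witnesses for small $g$, while for large $g$ Theorem~\ref{thm:positive} places us in the trivial alternative, so the two alternatives do partition the parameter $g$.

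First I would reduce from a planar, girth-robust NP-complete problem --- for instance a suitable planar variant of (monotone) \textsc{Not-All-Equal 3-Sat}, or of $T$-coloring on a broader class --- and realize each variable and each constraint by a gadget living \emph{inside} the class. The engine of the reduction is the witness $G_0$: by passing to a vertex-minimal non-mappable graph I may assume $G_0$ is \emph{critical}, so that deleting any vertex yields a mappable graph. From such a critical $G_0$ I would extract, via an indicator/sub-indicator construction, a \emph{forcing gadget} that, attached at a distinguished vertex $u$, restricts the admissible images of $u$ to a proper nonempty subset of $V(T)$; combining forcing gadgets with different restriction patterns yields the equality, inequality, and ``clause'' gadgets needed to encode the chosen problem. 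Long subdivided paths serve as wires, which simultaneously route the gadgets in the plane and inflate the girth, so that the assembled instance stays planar and has girth at least $g$; for the bipartite statements I would additionally control the parities of all connecting paths so as to keep the whole construction bipartite.

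The hard part will be the gadget extraction itself. Since $G_0$ is known only \emph{abstractly} --- its structure depends on $g$ and is not given explicitly --- I cannot hand-draw gadgets as in a fixed-target reduction; instead I must argue that the mere failure of a homomorphism, together with criticality, always produces a forcing gadget that genuinely forbids at least one image, and that enough such gadgets exist to simulate a complete set of logical gates. Checking that these black-box gadgets compose correctly, that they never collapse the symmetries of $T$ in a way that trivializes a constraint, and that they can be laid out planarly with the prescribed girth (and, where required, bipartiteness) while preserving the value of the encoded instance, is where essentially all of the difficulty concentrates. The remaining bookkeeping --- planarity of the layout, the girth lower bound coming from the subdivisions, and the equivalence of the two decision problems --- is then routine.
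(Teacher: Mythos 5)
Your top-level framing (NP membership is trivial, the dichotomy reduces to proving NP-hardness from the existence of a witness graph, and the girth threshold separates the two alternatives) agrees with the paper. But the central step of your hardness argument---extracting from the abstract critical witness a complete family of logic gadgets (equality, inequality, clause, wires, planar layout) and reducing from planar \textsc{NAE-3-Sat}---is exactly where your proposal has a genuine gap, and it is the step the paper does in a fundamentally different and much lighter way. The paper never simulates logic from the black-box witness. It needs only \emph{one} kind of gadget: a graph $H$ with a vertex $v$ that \emph{forces} a set $S$, meaning the possible images of $v$ over all colorings of $H$ are exactly $S$. The initial such gadget comes from a witness $G$ that is minimal in the \emph{homomorphism order}: replacing an arc $v_0v_3$ of $G$ by the path with arcs $v_0v_1$, $v_2v_1$, $v_2v_3$ (resp.\ subdividing an edge twice with prescribed edge colors) yields a colorable graph $G'$ in which $v_2$ can never receive $c$ or $e$ (resp.\ $v_1$ can never receive $f$), since such a coloring would yield a coloring of $G$ itself. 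Then---and this is the step that cannot be done abstractly---the paper runs a finite, explicitly target-specific case analysis (Figures~\ref{fig:oriented} and~\ref{fig:2edge}): for every possible nonempty forced subset $\mathcal{S}$ of $\acc{a,b,d}$ (resp.\ of $\acc{a,b,c,d,e}$), a chain of constructions (in/out-neighborhoods, attaching copies of the gadget along circuits or specially colored cycles) converts a gadget forcing $\mathcal{S}$ into one forcing $\acc{a,b,c,d}$, which induces $\overrightarrow{T_4}$ (resp.\ a ``good'' set whose core is $T_4$). With that single gadget, NP-hardness follows by reduction \emph{from} $\overrightarrow{T_4}$-coloring (resp.\ $T_4$-coloring), already known to be NP-complete for subcubic bipartite planar graphs with any given girth~\cite{GO15,MO17}: attach a copy of the forcing gadget to each vertex of the instance. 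Planarity, girth, and bipartiteness are preserved for free, and no clause or crossover gadget is ever needed.

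Concretely, two things make your plan unworkable as stated. First, forcing ``a proper nonempty subset of $V(T)$'' is far too weak a conclusion to build gates from: which subsets are forcible, and how forced sets transform under attachments, depends entirely on the concrete adjacency structure of $\overrightarrow{T_5}$ and $T_6$, so the analysis cannot remain black-box in the witness; the paper's Figures~\ref{fig:oriented} and~\ref{fig:2edge} are precisely this unavoidable finite case analysis. Second, reducing from \textsc{NAE-3-Sat} inside planar graphs of arbitrarily large girth would additionally require crossover-free, high-girth clause gadgets---a notoriously delicate construction problem that the paper sidesteps entirely by choosing as its source problem a homomorphism problem already proved NP-complete on subcubic bipartite planar graphs of every girth. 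Your proposal correctly identifies where the difficulty lies but leaves it unresolved, and the resolution is not a refinement of your route but a different one: reduce the problem to a known girth-robust planar hardness result via a single color-restricting gadget, rather than re-proving planar hardness from scratch.
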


Finally, we can use Theorem~\ref{thm:NPC} with the non-colorable graphs in Theorem~\ref{thm:ce}.

\begin{corollary}\label{cor:cor}{\ }
\begin{enumerate}
\item Deciding whether a bipartite graph in $P_{14}^{(1,0)}$ maps to $\overrightarrow{T_5}$ is NP-complete.
\item Deciding whether a graph in $P_{11}^{(0,2)}$ maps to $T_6$ is NP-complete.
\item Deciding whether a bipartite graph in $P_{10}^{(0,2)}$ maps to $T_6$ is NP-complete.
\end{enumerate}
\end{corollary}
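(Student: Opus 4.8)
The plan is to combine the complexity dichotomy of Theorem~\ref{thm:NPC} with the explicit non-colorable constructions of Theorem~\ref{thm:ce}. The key observation is that each statement in Theorem~\ref{thm:NPC} has the form ``either every (bipartite) graph in the class maps to the target, or the corresponding homomorphism problem is NP-complete.'' Thus, to land in the NP-complete branch, it suffices to exhibit a single instance in the relevant class that does \emph{not} map to the target. Theorem~\ref{thm:ce} provides exactly such witnesses, with girth values matching the indices appearing in the corollary, so each part reduces to checking that the cited witness lives in the correct class.

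For the first part, I would take the oriented bipartite $2$-outerplanar graph of girth $14$ from Theorem~\ref{thm:ce}.1. Being planar with girth at least $14$ and bipartite, it belongs to the class of bipartite graphs in $P_{14}^{(1,0)}$, and by construction it does not map to $\overrightarrow{T_5}$. Hence the ``every bipartite graph maps'' alternative of the bipartite dichotomy in Theorem~\ref{thm:NPC}.1, instantiated at $g=14$, fails, which forces the NP-complete alternative and yields the claim.

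For the second and third parts, I would proceed identically. The graph of Theorem~\ref{thm:ce}.2 is a $2$-edge-colored planar graph of girth $11$ not mapping to $T_6$, placing it in $P_{11}^{(0,2)}$ and triggering the NP-complete branch of the general dichotomy in Theorem~\ref{thm:NPC}.2 at $g=11$. The graph of Theorem~\ref{thm:ce}.3 is a $2$-edge-colored bipartite planar graph of girth $10$ not mapping to $T_6$, placing it among the bipartite graphs of $P_{10}^{(0,2)}$ and triggering the NP-complete branch of the bipartite dichotomy at $g=10$. In each case, membership in NP is already subsumed by the word ``NP-complete'' in the dichotomy, so no separate argument for it is needed.

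Since the heavy lifting---establishing the dichotomy itself and producing the non-colorable graphs---is already carried out in Theorems~\ref{thm:NPC} and~\ref{thm:ce}, there is no genuine obstacle here. The only point requiring care is bookkeeping: for each part one must verify that the cited witness lies in precisely the class (matching girth, and bipartiteness where required) to which the corresponding dichotomy is applied, so that the failure of the ``everything maps'' alternative is legitimate.
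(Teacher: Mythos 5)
Your proposal is correct and is exactly the paper's argument: the paper derives Corollary~\ref{cor:cor} by combining the dichotomies of Theorem~\ref{thm:NPC} (instantiated at $g=14$, $11$, and $10$) with the non-colorable witnesses of Theorem~\ref{thm:ce}, which rule out the ``every graph maps'' alternative in each case. Your bookkeeping of girth, bipartiteness, and which dichotomy (general vs.\ bipartite) applies to each part matches the paper's intent precisely.
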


A 2-edge-colored path or cycle is said to be \emph{alternating} if any two adjacent edges have distinct colors.

\begin{proposition}[folklore]\label{prop:3n-6}{\ }
\begin{itemize}
\item Every planar simple graph on $n$ vertices has at most $3n-6$ edges.
\item Every planar simple graph satisfies $(\mad(G)-2)\cdot(g(G)-2)<4$.
\end{itemize}
\end{proposition}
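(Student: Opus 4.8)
The plan is to derive both statements from Euler's formula $V-E+F=2$ for connected plane graphs, treating the second bullet as a girth-weighted refinement of the argument for the first.

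For the first bullet I would begin by reducing to the connected case: adding edges between components of a plane graph keeps it planar and simple while only increasing $E$, whereas the bound $3n-6$ depends on $n$ alone, so it suffices to bound $E$ for a connected plane graph with $n\ge3$. Fixing an embedding, I count incidences between edges and faces. Because $G$ is simple with $n\ge3$, every face is bounded by a closed walk of length at least $3$, and every edge borders at most two faces, so $2E\ge 3F$. Substituting $F\le\tfrac23 E$ into Euler's formula gives $2=V-E+F\le V-\tfrac13 E$, that is, $E\le 3V-6$.

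For the second bullet I would rerun the incidence count with $g=g(G)$ in place of $3$. For a connected plane graph containing at least one cycle, every face boundary is a closed walk that contains a cycle and hence has length at least $g$, giving $2E\ge gF$. Combining with Euler's formula yields $E\le\tfrac{g}{g-2}\paren{V-2}<\tfrac{g}{g-2}V$, so the average degree obeys $\tfrac{2E}{V}<\tfrac{2g}{g-2}$; subtracting $2$ (note $\tfrac{2g}{g-2}-2=\tfrac{4}{g-2}$) and multiplying by $g-2>0$ turns this into $\paren{\tfrac{2E}{V}-2}\paren{g-2}<4$. To lift this to $\mad$, write $\mad(G)=\tfrac{2\abs{E(H)}}{\abs{V(H)}}$ for a subgraph $H$ attaining the maximum; $H$ is again planar, and since deleting vertices or edges cannot shorten cycles, $g(H)\ge g(G)$. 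If $\mad(G)\le 2$ the inequality is immediate because $g(G)-2\ge1>0$. Otherwise $\abs{E(H)}>\abs{V(H)}$ forces $H$ to contain a cycle, the estimate above applies to $H$, and using $g(H)\ge g(G)$ together with $\mad(G)-2>0$ gives $\paren{\mad(G)-2}\paren{g(G)-2}\le\paren{\mad(G)-2}\paren{g(H)-2}<4$.

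The step I expect to be most delicate is the incidence bound $2E\ge gF$. When $G$ is not $2$-connected its face boundaries are closed walks rather than simple cycles, and each bridge is traversed twice, so I would have to verify that no face is bounded solely by bridges (which could make its boundary shorter than $g$); this holds because the bridges of a connected graph form a forest and therefore enclose no bounded region, so every face boundary still contains a genuine cycle of length at least $g$. One cannot circumvent this by restricting the $\mad$ argument to $2$-connected graphs, since the densest subgraph need not be $2$-connected: two triangles sharing a single vertex already have average degree $\tfrac{12}{5}$, strictly larger than that of either triangle, so the face-length estimate genuinely has to be established for arbitrary connected plane graphs containing a cycle.
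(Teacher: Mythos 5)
The paper never proves this proposition: it is stated as folklore and used as a black box (in Lemma~\ref{lem:discharge} and in the proof of Theorem~\ref{thm:Pmn}), so there is no proof of record to compare against. Your argument is the standard one --- Euler's formula plus double counting of edge--face incidences --- and it is correct in outline for both bullets; the first bullet is fine as written (modulo the usual implicit assumption $n\ge 3$, without which $3n-6$ is negative or too small, and which your reduction already respects).

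Two steps in the second bullet need patching. First, you establish the estimate $\paren{2E/V-2}\paren{g-2}<4$ only for \emph{connected} plane graphs with a cycle, but you then apply it to a subgraph $H$ attaining $\mad(G)$, and such an $H$ need not be connected (you address the 2-connectedness pitfall but not this one). The fix is one line: if $H$ has components $H_1,\dots,H_k$, then $2\abs{E(H)}/\abs{V(H)}$ is a mediant of the ratios $2\abs{E(H_i)}/\abs{V(H_i)}$ and hence at most their maximum, so some component also attains $\mad(G)$ and you may take $H$ connected; alternatively, Euler's formula for a plane graph with $k$ components reads $V-E+F=1+k\ge 2$, and the same computation goes through. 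Second, your justification of $2E\ge gF$ rules out only faces bounded \emph{solely by bridges}, but that is not quite the statement you need: a priori a face boundary could be an acyclic subgraph containing non-bridge edges (edges lying on cycles of $G$ that are not themselves contained in that boundary), and for the unbounded face the phrase ``forests enclose no bounded region'' has no force, since that face is not enclosed by anything. The statement to prove is that in a plane graph containing a cycle, \emph{every} face boundary contains a cycle, and your forest idea does prove it once applied to the whole boundary subgraph $B$ of a face $f$ rather than to the bridges: if $B$ were acyclic, its complement in the plane would be connected, so one could join a point of $f$ to an interior point of an edge of a cycle of $G$ not belonging to $B$ by an arc avoiding $B$; that arc must leave $\overline{f}$, hence must cross the boundary of $f$, which lies in $B$ --- a contradiction. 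With these two repairs your proof is complete and is exactly the kind of argument the paper implicitly invokes.
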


\section{Proof of Theorem~\ref{thm:positive}}
We use the discharging method for both results in Theorem~\ref{thm:positive}. The following lemma will handle the discharging part.
We call a vertex of degree $n$ an $n$-vertex and a vertex of degree at least $n$ an $n^+$-vertex.
If there is a path made only of $2$-vertices linking two vertices $u$ and $v$, we say that $v$ is a weak-neighbor of $u$.
If $v$ is a neighbor of $u$, we also say that $v$ is a weak-neighbor of $u$. We call a (weak-)neighbor of degree $n$ an $n$-(weak-)neighbor.

\begin{lemma}\label{lem:discharge}
Let $k$ be a non-negative integer.
Let $G$ be a graph with minimum degree 2 such that every 3-vertex has at most $k$ 2-weak-neighbors and every path contains at most $\tfrac{k+1}2$ consecutive 2-vertices.
Then $\mad(G)\ge2+\tfrac2{k+2}$. In particular, $G$ cannot be a planar graph with girth at least $2k+6$.
\end{lemma}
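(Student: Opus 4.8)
The plan is to prove the bound on $\mad(G)$ by discharging, and then deduce the ``in particular'' clause from the second item of Proposition~\ref{prop:3n-6}. Put $c=2+\tfrac2{k+2}=\tfrac{2k+6}{k+2}$. Since $\mad(G)$ is a maximum of average degrees over subgraphs and $G$ is a subgraph of itself, it suffices to show that $G$ itself has average degree at least $c$, i.e. $2|E(G)|\ge c\,|V(G)|$. I would first reduce to the case where $G$ is connected (work inside a densest component, which still satisfies the hypotheses) and contains at least one vertex of degree $\ge3$; the leftover case, where $G$ is a single cycle, is degenerate and will not occur under the girth hypothesis of the ``in particular'' clause, because a cycle long enough to have girth $\ge 2k+6$ contains a path with more than $\tfrac{k+1}2$ consecutive $2$-vertices and so violates the assumptions.

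Next I would assign to each vertex $v$ the charge $\mu(v)=d(v)-c$, so that $\sum_v\mu(v)=2|E(G)|-c\,|V(G)|$ and the goal becomes $\sum_v\mu(v)\ge0$. The only negative charges sit on $2$-vertices, each carrying $2-c=-\tfrac2{k+2}$, while every $d$-vertex with $d\ge3$ carries $d-c\ge3-c=\tfrac{k}{k+2}\ge0$. The $2$-vertices partition into maximal \emph{threads} of consecutive $2$-vertices, each bounded by two vertices of degree $\ge3$, and each of length at most $\tfrac{k+1}2$ by hypothesis. The single rule I would use is: every $2$-vertex draws $\tfrac1{k+2}$ from the $3^+$-vertex at each end of its thread, so that each $2$-vertex finishes at charge $0$.

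It then remains to check that every $3^+$-vertex $u$ stays non-negative. Writing $w(u)$ for the number of $2$-vertices on the threads incident to $u$ (its number of $2$-weak-neighbors), $u$ gives away exactly $\tfrac{w(u)}{k+2}$. If $d(u)=3$ the hypothesis yields $w(u)\le k$, so $u$ ends at charge $\ge\tfrac{k}{k+2}-\tfrac{k}{k+2}=0$; here the role of the ``at most $k$'' assumption is essential, since the thread-length bound alone would only give $w(u)\le\tfrac{3(k+1)}2$, which is too weak. If $d(u)=d\ge4$, each of the $d$ incident threads contributes at most $\tfrac{k+1}2$ to $w(u)$, so $w(u)\le\tfrac{d(k+1)}2$, and the required inequality $d-c\ge\tfrac{w(u)}{k+2}$ reduces, after clearing denominators, to $(k+3)(d-4)\ge0$, which holds since $k\ge0$ and $d\ge4$. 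Thus all final charges are non-negative and $\mad(G)\ge c$. For the last sentence, assuming $G$ planar with girth $g\ge2k+6$ and applying Proposition~\ref{prop:3n-6} gives $(\mad(G)-2)(g-2)<4$; since $\mad(G)-2\ge\tfrac2{k+2}>0$ and $g-2\ge2k+4$, this forces $\mad(G)-2<\tfrac4{2k+4}=\tfrac2{k+2}$, a contradiction, so no such planar graph exists.

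The main obstacle I anticipate is not any of these computations but the structural bookkeeping behind the phrase ``each thread is bounded by two $3^+$-vertices''. The clean accounting above silently assumes that the two ends of a thread are \emph{distinct}. A thread both of whose ends coincide — a cycle hung off a single cut-vertex — is counted once in $w(u)$ but serviced twice by the discharging rule, and one can build graphs with such a configuration that satisfy both numerical hypotheses yet have average degree strictly below $c$. This is precisely why I would restrict the argument to $2$-connected $G$ (equivalently, apply it blockwise to a minimal counterexample), where cut-vertex threads cannot occur, or — as in the intended application — exclude them via the girth bound, since an attached cycle would once more be too short to be compatible with girth $\ge 2k+6$.
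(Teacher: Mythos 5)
Your discharging is the paper's own argument in lightly different clothing: the same transfer of $\tfrac1{k+2}$ across each end of a thread, the same three cases $d(v)=2,3,\ge4$ (your inequality $(k+3)(d-4)\ge0$ is the paper's $\tfrac d2\left(1+\tfrac1{k+2}\right)\ge 2+\tfrac2{k+2}$ in disguise), and the same final appeal to Proposition~\ref{prop:3n-6}.

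The obstacle you flag at the end, however, is not hypothetical: it is a genuine gap in the paper's own proof, and your guess that the numerical hypotheses do not exclude such configurations is correct. The paper asserts that a $2$-vertex ``receives $\tfrac1{k+2}$ from both of its $3$-weak-neighbors'', which presupposes that every thread has two \emph{distinct} $3^+$-ends, and that such ends exist at all. For $k=3$, take two triangles $uab$ and $u'cd$ joined by the edge $uu'$: this graph has minimum degree $2$, each $3$-vertex has only two $2$-weak-neighbors, and no path contains more than $2=\tfrac{k+1}2$ consecutive $2$-vertices; yet $\mad(G)=\tfrac73<2+\tfrac2{k+2}=\tfrac{12}5$. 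A cycle component of length at most $\tfrac{k+1}2$ is an even simpler counterexample. So the $\mad$ part of the lemma is false as literally stated, and no choice of discharging rule can repair it, since in these examples the total charge $2|E(G)|$ is itself below $\left(2+\tfrac2{k+2}\right)|V(G)|$. What survives is exactly the ``in particular'' clause --- the only part the paper ever uses --- and there your girth-based fix is the right one: girth at least $2k+6$ is incompatible with an attached cycle or with a cycle component, because either would contain a path of more than $\tfrac{k+1}2$ consecutive $2$-vertices; once these are excluded, every thread has two distinct $3^+$-ends and your accounting closes. By contrast, your parenthetical blockwise/$2$-connected alternative cannot rescue the $\mad$ statement (in the example above the blocks are cycles, on which the hypotheses degenerate and the conclusion fails), so the lemma should be stated with the girth hypothesis built in, or with the explicit hypothesis that every $2$-vertex lies on a thread joining two distinct $3^+$-vertices; in that form your proof is complete and correct.
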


\begin{proof}
Let $G$ be as stated. Every vertex has an initial charge equal to its degree. Every $3^+$-vertex gives $\tfrac1{k+2}$ to each of its 2-weak-neighbors.
Let us check that the final charge $ch(v)$ of every vertex $v$ is at least $2+\tfrac2{k+2}$.
\begin{itemize}
 \item If $d(v)=2$, then $v$ receives $\tfrac1{k+2}$ from both of its 3-weak-neighbors. Thus $ch(v)=2+\tfrac2{k+2}$.
 \item If $d(v)=3$, then $v$ gives $\tfrac1{k+2}$ to each of its 2-weak-neighbors. Thus $ch(v)\ge3-\tfrac{k}{k+2}=2+\tfrac2{k+2}$.
 \item If $d(v)=d\ge4$, then $v$ has at most $\tfrac{k+1}2$ 2-weak-neighbors in each of the $d$ incident paths.
 Thus $ch(v)\ge d-d\paren{\tfrac{k+1}2}\paren{\tfrac1{k+2}}=\tfrac d2\paren{1+\tfrac1{k+2}}\ge2+\tfrac2{k+2}$.
\end{itemize}
This implies that $mad(G)\ge2+\frac2{k+2}$.
Finally, if $G$ is planar, then the girth of $G$ cannot be at least $2k+6$, since otherwise $(\mad(G)-2)\cdot(g(G)-2)\ge\paren{2+\tfrac2{k+2}-2}\paren{2k+6-2}=\paren{\tfrac2{k+2}}\paren{2k+4}=4$, which contradicts Proposition~\ref{prop:3n-6}.
\end{proof}

%\begin{proposition}\label{prop:madgirth}
%\end{proposition}

\subsection{Proof of Theorem~\ref{thm:positive}.1}
We prove that the oriented planar graph $\overrightarrow{T_5}$ on 5 vertices from Figure~\ref{fig:t_oriented} is $P_{28}^{(1,0)}$-universal by contradiction.
Assume that $G$ is an oriented planar graphs with girth at least $28$ that does not admit a homomorphism to $\overrightarrow{T_5}$
and is minimal with respect to the number of vertices.
By minimality, $G$ cannot contain a vertex $v$ with degree at most one since a $\overrightarrow{T_5}$-coloring of $G-v$ can be extended to $G$.
Similarly, $G$ does not contain the following configurations.

\begin{itemize}
\item A path with 6 consecutive 2-vertices.
\item A $3$-vertex with at least 12 2-weak-neighbors. 
\end{itemize}

Suppose that $G$ contains a path $u_0u_1u_2u_3u_4u_5u_6u_7$ such that the degree of $u_i$ is two for $1\le i\le6$.
By minimality of $G$, $G-{u_1,u_2,u_3,u_4,u_5,u_6}$ admits a $\overrightarrow{T_5}$-coloring $\varphi$.
We checked on a computer that for any $\varphi(v_0)$ and $\varphi(v_6)$ in $V\paren{\overrightarrow{T_5}}$
and every possible orientation of the 7 arcs $u_iu_{i+1}$, we can always extend $\varphi$ into a $\overrightarrow{T_5}$-coloring of $G$, a contradiction.

Suppose that $G$ contains a 3-vertex $v$ with at least 12 2-weak-neighbors. Let $u_1$, $u_2$, $u_3$ be the $3^+$-weak-neighbors of $v$
and let $l_i$ be the number of common 2-weak-neighbors of $v$ and $u_i$, i.e., $2$-vertices on the path between $v$ and $l_i$.
Without loss of generality and by the previous discussion, we have $5\ge l_1\ge l_2\ge l_3$ and $l_1+l_2+l_3\ge12$.
So we have to consider the following cases:
\begin{itemize}
\item\textbf{Case 1:} $l_1=5$, $l_2=5$, $l_3=2$.
\item\textbf{Case 2:} $l_1=5$, $l_2=4$, $l_3=3$.
\item\textbf{Case 3:} $l_1=4$, $l_2=4$, $l_3=4$.
\end{itemize}

By minimality, the graph $G'$ obtained from $G$ by removing $v$ and its 2-weak-neighbors admits a $\overrightarrow{T_5}$-coloring $\varphi$.
Let us show that in all three cases, we can extend $\varphi$ into a $\overrightarrow{T_5}$-coloring of $G$ to get a contradiction.

With an extensive search on a computer we found that if a vertex $v$ is connected to a vertex $u$ colored in $\varphi(u)$ by a path
made of $l$ 2-vertices ($0\le l\le5$) then $v$ can be colored in:

\begin{itemize}
\item at least 1 color if $l=0$,
\item at least 2 colors if $l=1$,
\item at least 2 colors if $l=2$ (the sets $\acc{c, d, e}$ and $\acc{b, c, d}$ are the only sets of size 3 that can be forbidden from $v$),
\item at least 3 colors if $l=3$, 
\item at least 4 colors if $l=4$ and
\item at least 4 colors if $l=5$ (only the sets $\acc{b}$, $\acc{c}$, and $\acc{e}$ can be forbidden from $v$).
\end{itemize}

In Case 1, $u_3$ forbids at most 3 colors from $v$ since $l_3=2$. If it forbids less than $3$ colors,
we will be able to find a color for $v$ since $u_1$ and $u_2$ forbid at most 1 color from $v$. The only sets of 3 colors that $u_3$ can forbid are $\acc{b,c,d}$ and $\acc{c, d, e}$.
Since $u_1$ and $u_2$ can each only forbid $b$, $c$ or $e$, we can always find a color for $v$.

In Case 2, $u_1$ and $u_2$ each forbid at most one color and $u_3$ forbids at most $2$ colors so there remains at least one color for $v$.

In Case 3, $u_1$, $u_2$, and $u_3$ each forbid at most one color, so there remains at least two colors for $v$.

We can always extend $\varphi$ into a $\overrightarrow{T_5}$-coloring of $G$, a contradiction.

So $G$ contains at most 5 consecutive 2-vertices and every 3-vertex has at most 11 2-weak-neighbors.
Using Lemma~\ref{lem:discharge} with $k=11$ contradicts the fact that the girth of $G$ is at least 28.

\subsection{Proof of Theorem~\ref{thm:positive}.2}
We prove that the 2-edge-colored planar graph $T_6$ on 6 vertices from Figure~\ref{fig:t_2edge} is $P_{22}^{(0,2)}$-universal by contradiction.
Assume that $G$ is a 2-edge-colored planar graphs with girth at least $22$ that does not admit a homomorphism to $T_6$ and is minimal with respect to the number of vertices.
By minimality, $G$ cannot contain a vertex $v$ with degree at most one since a $T_6$-coloring of $G-v$ can be extended to $G$.
Similarly, $G$ does not contain the following configurations.

\begin{itemize}
\item A path with 5 consecutive 2-vertices.
\item A $3$-vertex with at least 9 2-weak-neighbors. 
\end{itemize}

Suppose that $G$ contains a path $u_0u_1u_2u_3u_4u_5u_6$ such that the degree of $u_i$ is two for $1\le i\le5$.
By minimality of $G$, $G-{u_1, u_2, u_3, u_4, u_5}$ admits a $T_6$-coloring $\varphi$.
We checked on a computer that for any $\varphi(v_0)$ and $\varphi(v_6)$ in $V(T)$ and every possible colors of the 6 edges $u_iu_{i+1}$,
we can always extend $\varphi$ into a $T_6$-coloring of $G$, a contradiction.

Suppose that $G$ contains a 3-vertex $v$ with at least 9 2-weak-neighbors. Let $u_1$, $u_2$, $u_3$ be the $3^+$-weak-neighbors of $v$
and let $l_i$ be the number of common 2-weak-neighbors of $v$ and $u_i$, i.e., $2$-vertices on the path between $v$ and $l_i$.
Without loss of generality and by the previous discussion, we have $4\ge l_1\ge l_2\ge l_3$ and $l_1+l_2+l_3\ge 9$. So we have to consider the following cases:

\begin{itemize}
\item\textbf{Case 1:} $l_1=3$, $l_2=3$, $l_3=3$.
\item\textbf{Case 2:} $l_1=4$, $l_2=3$, $l_3=2$.
\item\textbf{Case 3:} $l_1=4$, $l_2=4$, $l_3=1$.
\end{itemize}

By minimality of $G$, the graph $G'$ obtained from $G$ by removing $v$ and its 2-weak-neighbors admits a $T_6$-coloring $\varphi$.
Let us show that in all three cases, we can extend $\varphi$ into a $T_6$-coloring of $G$ to get a contradiction.

With an extensive search on a computer we found that if a vertex $v$ is connected to a vertex $u$ colored in $\varphi(u)$
by a path $P$ made of $l$ 2-vertices ($0\le l\le 4$) then $v$ can be colored in:

\begin{itemize}
\item at least 1 color if $l=0$ (the sets ${a, c, d, e, f}$ and ${b, c, d, e, f}$ of colors are the only sets of size 5 that can be forbidden from $v$
for some $\varphi(u)\in T$ and edge-colors on $P$),
\item at least 2 colors if $l=1$ (the sets ${a, b, c, f}$ and ${b, c, e, f}$ are the only sets of size 4 that can be forbidden from $v$),
\item at least 3 colors if $l=2$ (the sets ${b, c, f}$, ${c, e, f}$ and ${d, e, f}$ are the only sets of size 3 that can be forbidden from $v$),
\item at least 4 colors if $l=3$ (the set ${c, b}$ is the only set of size 2 that can be forbidden from $v$), and
\item at least 5 colors if $l=4$ (the sets ${c}$ and ${f}$ are the only sets of size 1 that can be forbidden from $v$).
\end{itemize}

Suppose that we are in Case 1. Vertices $u_1$, $u_2$, and $u_3$ each forbid at most 2 colors from $v$ since $l_1=l_2=l_3=3$.
Suppose that $u_1$ forbids 2 colors. It has to forbid colors $c$ and $f$ (since it is the only pair of colors that can be forbidden by a path made of 3 2-vertices).
If $u_2$ or $u_3$ also forbids 2 colors, they will forbid the exact same pair of colors. We can therefore assume that they each forbid 1 color from $v$.
There are 6 available colors in $T_6$, so we can always find a color for $v$ and extend $\varphi$ to a $T_6$-coloring of $G$, a contradiction.
We proceed similarly for the other two cases.

So $G$ contains at most 4 consecutive 2-vertices and every 3-vertex has at most 8 2-weak-neighbors.
Then Lemma~\ref{lem:discharge} with $k=8$ contradicts the fact that the girth of $G$ is at least 22.

\section{Proof of Theorem~\ref{thm:negative}.1}
We construct an oriented bipartite cactus graph with girth at least $g$ and oriented chromatic number at least 5. Let $g'$ be such that $g'\ge g$ and $g'\equiv4\pmod{6}$.
Consider a circuit $v_1,\cdots,v_{g'}$. Clearly, the oriented chromatic number of this circuit is 4 and the only tournament on 4 vertices it can map to
is the tournament $\overrightarrow{T_4}$ induced by the vertices $a$, $b$, $c$, and $d$ in $\overrightarrow{T_5}$.
Now we consider the cycle $C=w_1,\cdots,w_{g'}$ containing the arcs $w_{2i-1}w_{2i}$ with $1\le i\le g'/2$, $w_{2i+1}w_{2i}$ with $1\le i\le g'/2-1$, and $w_{g'}w_1$.

Suppose for contradiction that $C$ admits a homomorphism $\varphi$ such that $\varphi(w_1)=d$.
This implies that $\varphi(w_2)=a$, $\varphi(w_3)=d$, $\varphi(w_4)=a$, and so on until $\varphi(w_{g'})=a$.
Since $\varphi(w_{g'})=a$ and $\varphi(w_1)=d$, $w_{g'}w_1$ should map to $ad$, which is not an arc of $\overrightarrow{T_4}$, a contradiction.

Our cactus graph is then obtain from the circuit $v_1,\cdots,v_{g'}$ and $g'$ copies of $C$ by identifying every vertex $v_i$ with the vertex $w_1$ of a copy of $C$.
This cactus graph does not map to $\overrightarrow{T_4}$ since one of the $v_i$ would have to map to $d$ and then the copy of $C$ attached to $v_i$ would not be $\overrightarrow{T_4}$-colorable.

\section{Proof of Theorem~\ref{thm:negative}.2}
We construct a 2-edge-colored bipartite outerplanar graph with girth at least $g$ that does not map to a 2-edge-colored planar graph with at most 5 vertices.
Let $g'$ be such that $g'\ge g$ and $g'\equiv2\pmod{4}$. Consider an alternating cycle $C=v_0,\cdots,v_{g'-1}$.
For every $0\le i\le g'-3$, we add $g'-2$ 2-vertices $w_{i,1},\cdots,w_{i,g'-2}$ that form the path $P_i=v_iw_{i,1}\cdots w_{i,g'-2}v_{i+1}$
such that the edges of $P_i$ get the color distinct from the color of the edge $v_iv_{i+1}$. Let $G$ be the obtained graph.
The 2-edge-colored chromatic number of $C$ is 5.
So without loss of generality, we assume for contradiction that $G$ admits a homomorphism $\varphi$ to a 2-edge-colored planar graph $H$ on 5 vertices.
Let us define $\mathcal{E}=\bigcup_{i\texttt{ even}}\varphi(v_i)$ and $\mathcal{O}=\bigcup_{i\texttt{ odd}}\varphi(v_i)$.
Since $C$ is alternating, $\varphi(v_i)\ne\varphi(v_{i+2})$ (indices are modulo $g'$). Since $g'\equiv2\pmod{4}$, there is an odd number of $v_i$ with an even (resp. odd) index.
Thus, $\abs{\mathcal{E}}\ge3$ and $\abs{\mathcal{O}}\ge3$. Therefore we must have $\mathcal{E}\cap\mathcal{O}\ne\emptyset$.

Notice that every two vertices $v_i$ and $v_j$ in $G$ are joined by a blue path and a red path such that the lengths of these paths have the same parity as $i-j$.
Thus, the blue (resp. red) edges of $H$ must induce a connected spanning subgraph of $H$. Since $|V(H)|=5$, $H$ contains at least 4 blue (resp. red) edges.
Since red and blue edges play symmetric roles in $G$ and since $|E(H)|\le9$ by Proposition~\ref{prop:3n-6}, we assume without loss of generality that $H$ contains exactly 4 blue edges.
Moreover, these 4 blue edges induce a tree. In particular, the blue edges induce a bipartite graph which partitions $V(H)$ into 2 parts.
Thus, every $v_i$ with even index is mapped into one part of $V(H)$ and every $v_i$ with odd index is mapped into the other part of $V(H)$.
So $\mathcal{E}\cap\mathcal{O}=\emptyset$, which is a contradiction.

\section{Proof of Theorem~\ref{thm:Pmn}}
Let $T$ be a $P_g^{(m, n)}$-universal planar graph for some $g$ that is minimal with respect to the subgraph order.

By minimality of $T$, there exists a graph $G \in P_g^{(m, n)}$ such that every color in $T$ has to be used at least once to color $G$.
Without loss of generality, $G$ is connected, since otherwise we can replace $G$ by the connected graph obtained from $G$
by choosing a vertex in each component of $G$ and identifying them. We create a graph $G'$ from $G$ as follows:

For each edge or arc $uv$ we create $4m+n$ paths starting at $u$ and ending at $v$ made of vertices of degree 2:

\begin{itemize}
\item For each type of edge, we create a path made of $g-1$ edges of this type.

\item For each type of arc, we create two paths made of $g-1$ arcs of this type such that the paths alternate between forward and backward arcs.
We make the paths such that $u$ is the tail of the first arc of one path and the head of the first arc of the other path.

\item Similarly, for each type of arc we create two paths made of $g$ arcs of this type such that the paths alternate between forward and backward arcs.
We make the paths such that $u$ is the tail of the first arc of one path and the head of the first arc of the other path.
\end{itemize}

Notice that $G'$ is in $P_g^{(m, n)}$ and thus admits a homomorphism $\varphi$ to $T$.
Since $G$ is connected and every color in $T$ has to be used at least once to color $G$, we can find for each pair of vertices and $(c_1, c_2)$ in $T$
and each type of edge a path $(v_1, v_2,\cdots, v_l)$ in $G'$ made only of edges of this type such that $\varphi(v_1)=c_1$ and $\varphi(v_l)=c_2$. \newline

This implies that for every pair of vertices $(c_1, c_2)$ in $T$ and each type of edge, there exists a walk from $c_1$ to $c_2$ made of edges of this type.
Therefore, for $1\le j\le n$, the subgraph induced by $E_j(T)$ is connected and contains all the vertices of $T$.
%Since $T$ must also be able to color an odd cycle of girth at least $g$ made of one type of edge, this subgraph must also contain at least one odd cycle.
So $E_j(T)$ contains a spanning tree of $T$. Thus $T$ contains at least $|V(T)|-1$ edges of each type.\newline

Similarly, we can find for each pair of vertices $(c_1, c_2)$ in $T$ and each type of arc a path of even length $(v_1, v_2,\cdots, v_{2l-1})$ in $G'$ made only of arcs of this type,
starting with a forward arc and alternating between forward and backward arcs such that $\varphi(v_1)=c_i$ and $\varphi(v_l)=c_2$.
We can also find a path of the same kind with odd length.\newline

This implies that for every pair of vertices $(c_1, c_2)$ in $T$ and each type of arc there exist a walk of odd length and a walk of even length
from $c_1$ to $c_2$ made of arcs of this type, starting with a forward arc and alternating between forward and backward arcs.
Let $p$ be the maximum of the length of all these paths. Given one of these walks of length $l$, we can also find a walk of length $l+2$
that satisfies the same constraints by going through the last arc of the walk twice more.
Therefore, for every $l\ge p$, every pair of vertices $(c_1, c_2)$ in $T$, and every type of arc,
it is possible to find a homomorphism from the path $P$ of length $l$ made of arcs of this type, starting with a forward arc and alternating
between forward and backward arcs to $T$ such that the first vertex is colored in $c_1$ and the last vertex is colored in $c_2$.\newline

We now show that this implies that $|A_j(T)|\ge2|V(T)|-1$ for $1\le j\le m$.
Let $P$ be a path $(v_1, v_2,\cdots, v_p, v_{p+1})$ of length $p$ starting with a forward arc and alternating between forward and backward arcs of the same type.
We color $v_1$ in some vertex $c$ of $T$. Let $C_i$ be the set of colors in which vertex $v_i$ could be colored.
We know that $C_1=c$ and $C_2$ is the set of direct successors of $c$. Set $C_3$ is the set of direct predecessors of vertices in $C_2$ so $C_1\subseteq C_3$ and,
more generally, $C_i \subseteq C_i+2$. Let $uv$ be an arc in $T$. If $u\in C_i$ with $i$ odd, then $v\in C_{i+1}$.
If $v\in C_i$ with $i$ even then $u\in C_{i+1}$. We can see that $uv$ is capable of adding at most one vertex to a $C_i$ (and every $C_j$ with $j\equiv i\mod 2$ and $i\le j$).
We know that $C_{p+1}=V(T)$ hence $T$ contains at least $2|V(T)|-1$ arcs of each type.\newline

Therefore, the underlying graph of $T$ contains at least $m\paren{2|V(T)|-1}+n\paren{|V(T)|-1}=\paren{2m+n}|V(T)|-m-n$ edges, which contradicts Proposition~\ref{prop:3n-6} for $2m+n\ge3$.

\section{Proof of Theorem~\ref{thm:ce}.1}
We construct an oriented bipartite 2-outerplanar graph with girth $14$ that does not map to $\overrightarrow{T_5}$.

The oriented graph $X$ is a cycle on 14 vertices $v_0,\cdots,v_{13}$ such that the tail of every arc is the vertex with even index, except for the arc $\overrightarrow{v_{13}v_0}$.
Suppose for contradiction that $X$ has a $\overrightarrow{T_5}$-coloring $h$ such that no vertex with even index maps to $b$.
The directed path $v_{12}v_{13}v_0$ implies that $h(v_{12})\ne h(v_0)$.
If $h(v_0)=a$, then $h(v_1)\in\acc{b,c}$ and $h(v_2)=a$ since $h(v_2)\ne b$. By contagion, $h(v_0)=h(v_2)=\cdots=h(v_{12})=a$, which is a contradiction. Thus $h(v_0)\ne a$.
If $h(v_0)=c$, then $h(v_1)=d$ and $h(v_2)=c$ since $h(v_2)\ne b$. By contagion, $h(v_0)=h(v_2)=\cdots=h(v_{12})=c$, which is a contradiction. Thus $h(v_0)\ne c$.
So $h(v_0)\not\in\acc{a,b,c}$, that is, $h(v_0)\in\acc{d,e}$. Similarly, $h(v_{12})\in\acc{d,e}$.
Notice that $\overrightarrow{T_5}$ does not contain a directed path $xyz$ such that $x$ and $z$ belong to $\acc{d,e}$.
So the path $v_{12}v_{13}v_0$ cannot be mapped to $\overrightarrow{T_5}$.
Thus $X$ does not have a $\overrightarrow{T_5}$-coloring $h$ such that no vertex with even index maps to $b$.

Consider now the path $P$ on 7 vertices $p_0,\cdots,p_6$ with the arcs $\overrightarrow{p_1p_0}$, $\overrightarrow{p_1p_2}$, $\overrightarrow{p_3p_2}$, $\overrightarrow{p_4p_3}$, $\overrightarrow{p_5p_4}$, $\overrightarrow{p_5p_6}$. It is easy to check that there exists no $\overrightarrow{T_5}$-coloring $h$ of $P$ such that $h(p_0)=h(p_6)=b$.

We construct the graph $Y$ as follows: we take 8 copies of $X$ called $X_{\texttt{main}}$, $X_0$, $X_2$, $X_4$, $\cdots$, $X_{12}$.
For every couple $(i,j)\in\acc{0,2,4,6,8,10,12}^2$, we take a copy $P_{i,j}$ of $P$, we identify the vertex $p_0$ of $P_{i,j}$
with the vertex $v_i$ of $X_{\texttt{main}}$ and we identify the vertex $p_6$ of $P_{i,j}$ with the vertex $v_j$ of $H_i$.

So $Y$ is our oriented bipartite 2-outerplanar graph with girth $14$. Suppose for contradiction that $Y$ has a $\overrightarrow{T_5}$-coloring $h$.
By previous discussion, there exists $i\in\acc{0,2,4,6,8,10,12}$ such that the vertex $v_i$ of $X_{\texttt{main}}$ maps to $b$.
Also, there exists $j\in\acc{0,2,4,6,8,10,12}$ such that the vertex $v_j$ of $X_i$ maps to $b$.
So the corresponding path $P_{i,j}$ is such that $h(p_0)=h(p_6)=b$, a contradiction. Thus $Y$ does not map to $\overrightarrow{T_5}$.

\section{Proof of Theorem~\ref{thm:ce}.2}
We construct a 2-edge-colored 2-outerplanar graph with girth $11$ that does not map to $T_6$.
We take 12 copies $X_0,\cdots,X_{11}$ of a cycle of length $11$ such that every edge is red.
Let $v_{i,j}$ denote the $j^{\text{\tiny th}}$ vertex of $X_i$.
For every $0\le i\le 10$ and $0\le j\le 10$, we add a path consisting of 5 blue edges between $v_{i,11}$ and $v_{j,i}$.

Notice that in any $T_6$-coloring of a red odd cycle, one vertex must map to $c$.
So we suppose without loss of generality that $v_{0,11}$ maps to $c$.
We also suppose without loss of generality that $v_{0,0}$ maps to $c$.
The blue path between $v_{0,11}$ and $v_{0,0}$ should map to a blue walk of length 5 from $c$ to $c$ in $T_6$.
Since $T_6$ contains no such walk, our graph does not map to $T_6$.

\section{Proof of Theorem~\ref{thm:ce}.3}
We construct a 2-edge-colored bipartite 2-outerplanar graph with girth $10$ that does not map to $T_6$.
By Theorem~\ref{thm:negative}.2, there exists a bipartite outerplanar graph $M$ with girth at least $10$
such that for every $T_6$-coloring $h$ of $M$, there exists a vertex $v$ in $M$ such that $h(v)=c$.

Let $X$ be the graph obtained as follows. Take a main copy $Y$ of $M$.
For every vertex $v$ of $Y$, take a copy $Y_v$ of $M$. Since $Y_v$ is bipartite, let $A$ and $B$ the two independent sets of $Y_v$.
For every vertex $w$ of $A$, we add a path consisting of 5 blue edges between $v$ and $w$.
For every vertex $w$ of $B$, we add a path consisting of 4 edges colored (blue, blue, red, blue) between $v$ and $w$.

Notice that $X$ is indeed a bipartite 2-outerplanar graph with girth $10$.
We have seen in the previous proof that $T_6$ contains no blue walk of length 5 from $c$ to $c$.
We also check that $T_6$ contains no walk of length 4 colored (blue, blue, red, blue) from $c$ to $c$.
By the property of $M$, for every $T_6$-coloring $h$ of $X$, there exist a vertex $v$ in $Y$ and a vertex $w$ in $Y_v$ such that $h(v)=h(w)=c$.
Then $h$ cannot be extended to the path of length 4 or 5 between $v$ and $w$.
So $X$ does not map to $T_6$.

\section{Proof of Theorem~\ref{thm:NPC}.1}
Let $g$ be the largest integer such that there exists a graph in $P_g^{(1,0)}$ that does not map to $\overrightarrow{T_5}$.
Let $G\in P_g^{(1,0)}$ be a graph that does not map to $\overrightarrow{T_5}$ and such that the underlying graph of $G$ is minimal with respect to the homomorphism order.

Let $G'$ be obtained from $G$ by removing an arbitrary arc $v_0v_3$ and adding two vertices $v_1$ and $v_2$ and the arcs $v_0v_1$, $v_2v_1$, $v_2v_3$.
By minimality, $G'$ admits a homomorphism $\varphi$ to $\overrightarrow{T_5}$. Suppose for contradiction that $\varphi(v_2)=c$. This implies that $\varphi(v_1)=\varphi(v_3)=d$.
Thus $\varphi$ provides a $\overrightarrow{T_5}$-coloring of $G$, a contradiction. So $\varphi(v_2)\ne c$ and, similarly, $\varphi(v_2)\ne e$.

Given a set $S$ of vertices of $\overrightarrow{T_5}$, we say that we force $S$ if we specify a graph $H$ and a vertex $v\in V(H)$ such that 
for every vertex $x\in V\paren{\overrightarrow{T_5}}$, we have $x\in S$ if and only if there exists a $\overrightarrow{T_5}$-coloring $\varphi$ of $H$ such that $\varphi(v)=x$.
Thus, with the graph $G'$ and the vertex $v_2$, we force a non-empty set $\mathcal{S}\subset V\paren{\overrightarrow{T_5}}\setminus\acc{c,e}=\acc{a,b,d}$.

We use a series of constructions in order to eventually force the set $\acc{a,b,c,d}$ starting from $\mathcal{S}$.
Recall that $\acc{a,b,c,d}$ induces the tournament $\overrightarrow{T_4}$.
We thus reduce $\overrightarrow{T_5}$-coloring to $\overrightarrow{T_4}$-coloring, which is NP-complete for subcubic bipartite planar graphs with any given girth~\cite{GO15}.

These constructions are summarized in the tree depicted in Figure~\ref{fig:oriented}. The vertices of this forest contain the non-empty subsets of $\acc{a,b,d}$ and a few other sets.
In this tree, an arc from $S_1$ to $S_2$ means that if we can force $S_1$, then we can force $S_2$. Every arc has a label indicating the construction that is performed.
In every case, we suppose that $S_1$ is forced on the vertex $v$ of a graph $H_1$ and we construct a graph $H_2$ that forces $S_2$ on the vertex $w$.

\begin{figure}[htpb]
\begin{center}
 \includegraphics[scale=0.8]{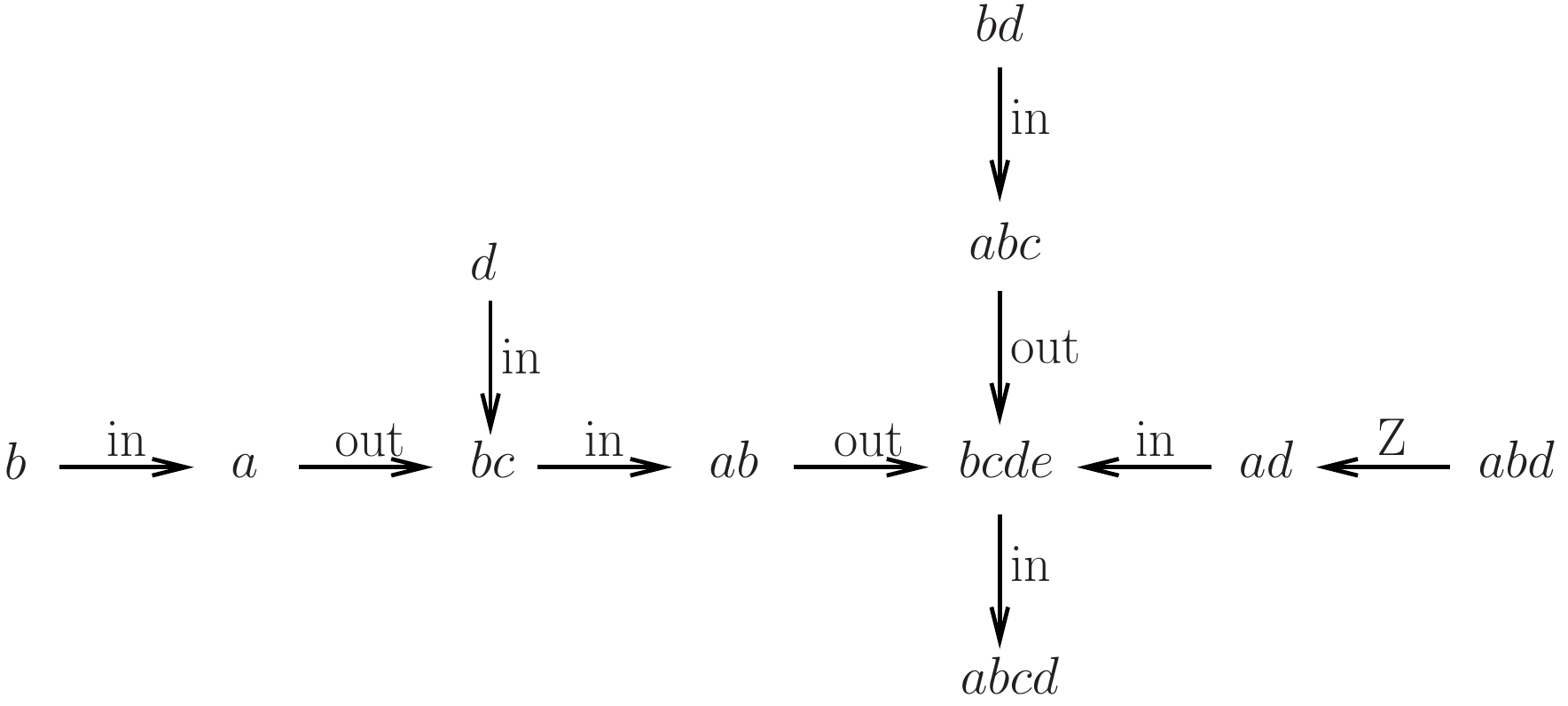}
 \caption{Forcing the set $\acc{a,b,c,d}$.\label{fig:oriented}}
\end{center}
\end{figure}

\begin{itemize}
\item Arcs labelled "out": The set $S_2$ is the out-neighborhood of $S_1$ in $\overrightarrow{T_5}$. We construct $H_2$ from $H_1$ by adding a vertex $w$ and the arc $vw$.
Thus, $S_2$ is indeed forced on the vertex $w$ of $H_2$.
\item Arcs labelled "in": The set $S_2$ is the in-neighborhood of $S_1$ in $\overrightarrow{T_5}$. We construct $H_2$ from $H_1$ by adding a vertex $w$ and the arc $wv$.
Thus, $S_2$ is indeed forced on the vertex $w$ of $H_2$.
\item Arc labelled "Z": Let $g'$ be the smallest integer such that $g'\ge g$ and $g'\equiv4\pmod{6}$. We consider a circuit $v_1,\cdots,v_{g'}$.
For $2\le i\le g'$, we take a copy of $H_1$ and we identify its vertex $v$ with $v_i$. We thus obtain the graph $H_2$ and we set $w=v_2$. Let $\varphi$ be any $T_6$-coloring of $H_2$.
By construction, $\acc{\varphi(v_2),\cdots,\varphi(v_{g'})}\subset S_1=\acc{a,b,d}$.
A circuit of length $\not\equiv0\pmod{3}$ cannot map to the 3-circuit induced by $\acc{a,b,d}$, so $\varphi(v_1)\in\acc{c,e}$.
If $\varphi(v_1)=c$ then $\varphi(v_2)=d$ and if $\varphi(v_1)=e$ then $\varphi(v_2)=a$. Thus $S_2=\acc{ad}$.
\end{itemize}

\section{Proof of Theorem~\ref{thm:NPC}.2}
Let $g$ be the largest integer such that there exists a graph in $P_g^{(0,2)}$ that does not map to $T_6$.
Let $G\in P_g^{(0,2)}$ be a graph that does not map to $T_6$ and such that the underlying graph of $G$ is minimal with respect to the homomorphism order.

Let $G'$ be obtained from $G$ by subdividing an arbitrary edge $v_0v_3$ twice to create the path $v_0v_1v_2v_3$
such that the edges $v_0v_1$ and $v_1v_2$ are red and the edge $v_2v_3$ gets the color of the original edge $v_0v_3$.
By minimality, $G'$ admits a homomorphism $\varphi$ to $T_6$.
Suppose for contradiction that $\varphi(v_1)=f$. This implies that $\varphi(v_0)=\varphi(v_2)=b$. Thus $\varphi$ provides a $T_6$-coloring of $G$, a contradiction.

Given a set $S$ of vertices of $T_6$, we say that we force $S$ if we specify a graph $H$ and a vertex $v\in V(H)$ such that 
for every vertex $x\in V(T_6)$, we have $x\in S$ if and only if there exists $T_6$-coloring $\varphi$ of $H$ such that $\varphi(v)=x$.
Thus, with the graph $G'$ and the vertex $v_1$, we force a non-empty set $\mathcal{S}\subset V(T_6)\setminus\acc{f}=\acc{a,b,c,d,e}$.

Recall that the core of a graph is the smallest subgraph which is also a homomorphic image.
We say that a subset $S$ of $V(T_6)$ is \emph{good} if the core of the subgraph induced by $S$ is isomorphic
to the graph $T_4$ which is a a clique on 4 vertices such that both the red and the blue edges induce a path of length $3$.
We use a series of constructions in order to eventually force a good set starting from $\mathcal{S}$.
We thus reduce $T_6$-coloring to $T_4$-coloring, which is NP-complete for subcubic bipartite planar graphs with any given girth~\cite{MO17}.

These constructions are summarized in the forest depicted in Figure~\ref{fig:2edge}.
The vertices of this forest are the non-empty subsets of $\acc{a,b,c,d,e}$ together with a few auxiliary sets of vertices containing $f$.
In this forest, an arc from $S_1$ to $S_2$ means that if we can force $S_1$, then we can force $S_2$. Every set with no outgoing arc is good.
We detail below the construction that is performed for each arc. In every case, we suppose that $S_1$ is forced on the vertex $v$ of a graph $H_1$
and we construct a graph $H_2$ that forces $S_2$ on the vertex $w$.

\begin{figure}
\begin{center}
 \includegraphics[scale=0.8]{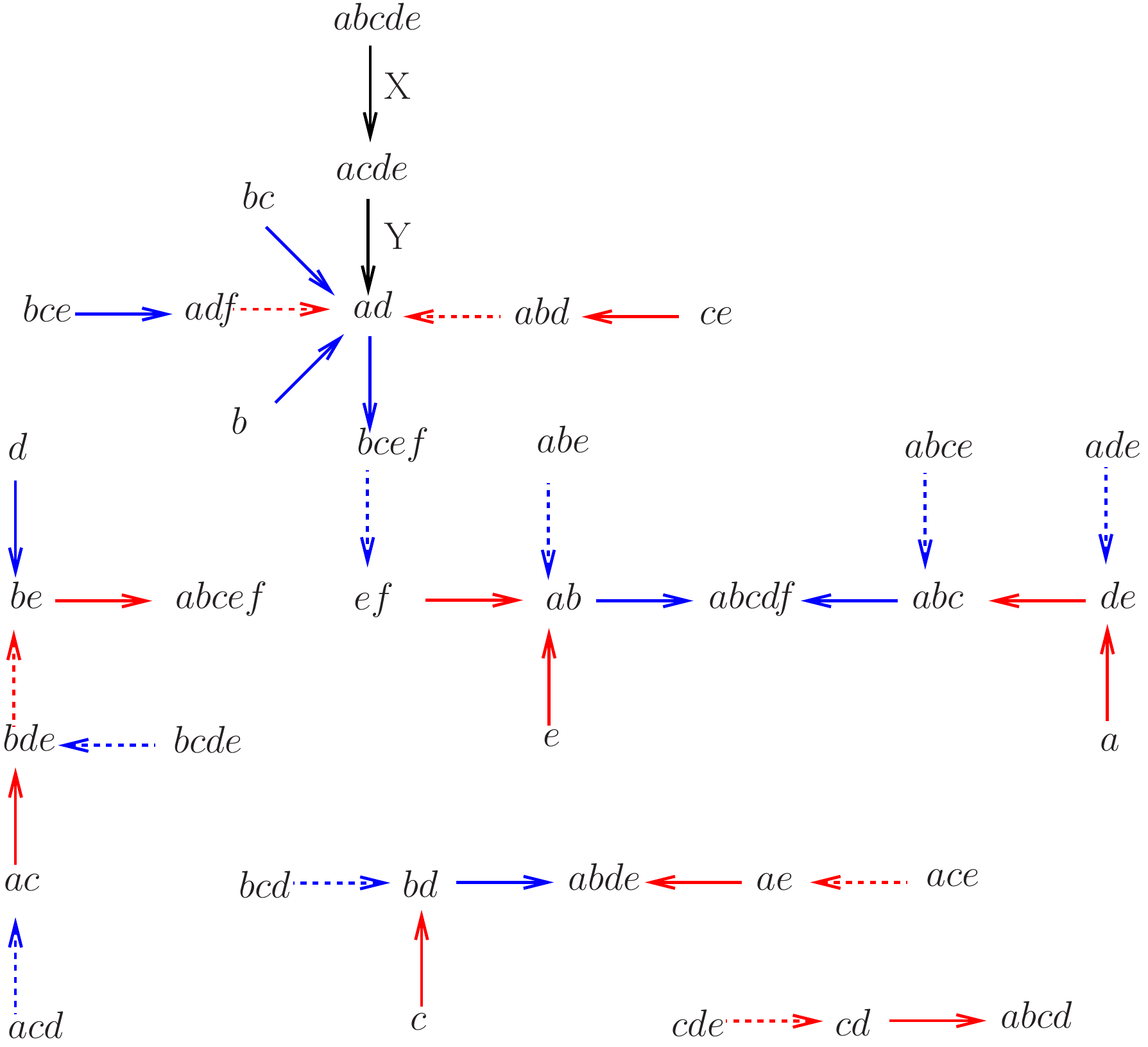}
 \caption{Forcing a good set.\label{fig:2edge}}
\end{center}
\end{figure}

\begin{itemize}
\item Blue arcs: The set $S_2$ is the blue neighborhood of $S_1$ in $T_6$. We construct $H_2$ from $H_1$ by adding a vertex $w$ adjacent to $v$ such that $vw$ is blue.
Thus, $S_2$ is indeed forced on the vertex $w$ of $H_2$.
\item Red arcs: The set $S_2$ is the red neighborhood of $S_1$ in $T_6$. The construction is as above except that the edge $vw$ is red.
\item Dashed blue arcs: The set $S_2$ is the set of vertices incident to a blue edge contained in the subgraph induced by $S_1$ in $T_6$. We construct $H_2$ from two copies of
$H_1$ by adding a blue edge between the vertex $v$ of one copy and the vertex $v$ of the other copy. Then $w$ is one of the vertices $v$.
\item Dashed red arcs: The set $S_2$ is the set of vertices incident to a red edge contained in the subgraph induced by $S_1$ in $T_6$.
The construction is as above except that the added edge is red.
\item Arc labelled "X": Let $g'=2\ceil{g/2}$. We consider an even cycle $v_1,\cdots,v_{g'}$ such that $v_1v_{g'}$ is red and the other edges are blue.
For every vertex $v_i$, we take a copy of $H_1$ and we identify its vertex $v$ with $v_i$. We thus obtain the graph $H_2$ and we set $w=v_1$.
Let $\varphi$ be any $T_6$-coloring of $H_2$. In any $T_6$-coloring of $H_2$, the cycle $v_1,\cdots,v_{g'}$ maps to a 4-cycle with exactly one red edge contained
in the subgraph of $T_6$ induced by $S_1=\acc{a,b,c,d,e}$. These 4-cycles are $aedb$ with red edge $ae$ and $cdba$ with red edge $cd$.
Since $w$ is incident to the red edge in the cycle $v_1,\cdots,v_{g'}$, $w$ can be mapped to $a$, $e$, $c$, or $d$ but not to $b$. Thus $S_2=\acc{a,c,d,e}$.
\item Arc labelled "Y": We consider an alternating cycle $v_0,\cdots,v_{8g-1}$. 
For every vertex $v_i$, we take a copy of $H_1$ and we identify its vertex $v$ with $v_i$.
We obtain the graph $H_2$ by adding the vertex $x$ adjacent to $v_0$ and $v_{4g+2}$ such that $xv_0$ and $xv_{4g+2}$ are blue. We set $w=v_0$.
In any $T_6$-coloring $\varphi$ of $H_2$, the cycle $v_1,\cdots,v_{g'}$ maps to the alternating $4$-cycle $acde$ contained in $S_1=\acc{a,c,d,e}$ such that $\varphi(v_i)=\varphi(v_{i+4\pmod{8g}})$.
So, a priori, either $\acc{\varphi(v_0),\varphi(v_{4g+2})}=\acc{a,d}$ or $\acc{\varphi(v_0),\varphi(v_{4g+2})}=\acc{c,e}$.
In the former case, we can extend $\varphi$ to $H_2$ by setting $\varphi(x)=b$. In the latter case, we cannot color $x$ since $c$ and $e$ have no common blue neighbor in $T_6$.
Thus, $\acc{\varphi(v_0),\varphi(v_{4g+2})}=\acc{a,d}$ and $S_2=\acc{a,d}$.
\end{itemize}

\bibliographystyle{alpha}
\bibliography{bib}

\end{document}